\documentclass[11pt, a4paper]{article}

\usepackage{amsmath}
\usepackage{amssymb}
\usepackage{amsthm}
\usepackage{appendix}

\newtheorem{theorem}{Theorem}[section]
\newtheorem{corollary}{Corollary}[section]

\newtheorem{remark}{Remark}[section]

\newtheorem{conjecture}{Conjecture}[section]

\begin{document}
\title{The Gaussian Gabor system at the critical density is a weighted lower semi frame}
\author{Elias Zikkos\\
Department of Mathematics, Khalifa University of Science and Technology\\ Abu Dhabi, United Arab Emirates\\
email address:  elias.zikkos@ku.ac.ae and eliaszikkos@yahoo.com}

\maketitle

\begin{abstract}
We show that the Gaussian Gabor system $G(\varphi, \mathbb{Z}^2)$ at the critical density is a weighted lower semi frame for $L^2 (\mathbb{R})$, thus resolving a question left open in \cite{Balazs2026JFAA}. This complements recent results showing that the system is neither a weighted frame nor admits a reproducing partner. 
In fact, after removing any atom from $G(\varphi, \mathbb{Z}^2)$, suitable weights produce a complete Riesz–Fischer sequence and lower semi frame.
\end{abstract}

Keywords: 
Gaussian Gabor system; critical density; weighted lower semi frame;
Riesz--Fischer sequence; Bessel sequence; biorthogonal families.

\smallskip
2020 MSC: 42C15 (primary); 46C05, 47A05 (secondary).

\section{Introduction and Results}
\setcounter{equation}{0}

Let $G(\varphi, \mathbb{Z}^2)$ be the Gaussian Gabor system $G(\varphi, \mathbb{Z}^2)$ at the critical density, that is
\[
G(\varphi, \mathbb{Z}^2)=\{e^{2\pi int}\cdot e^{-\pi(t-m)^2}:\, (m,n)\in \mathbb{Z}^2\}.
\]
It is well known that $G(\varphi, \mathbb{Z}^2)$ has excess equal to one in $L^2 (\mathbb{R})$, that is,
it is complete and remains complete if any one element is removed but not if two.
It is also a Bessel sequence (upper frame) but not a frame. If any element is removed, then the remaining complete system is not a Schauder basis.
\begin{remark}
These classical facts can be found in Christensen \cite[Corollary 11.4.3 and Theorem 11.6.1]{Christensen}  
and Heil \cite[Example 11.34]{Heil}.
\end{remark}

It was recently proved in \cite[Section 6]{Balazs2026JFAA} that $G(\varphi, \mathbb{Z}^2)$ is $not$ a weighted
frame for $L^2 (\mathbb{R})$, relying on the fact, established in \cite[Corollaries 2.6 and 4.3]{Heil2025JFAA},\emph{} 
that $G(\varphi, \mathbb{Z}^2)$ does not have a reproducing partner.
The question whether it could be a $weighted\,\, lower\,\, semi\,\, frame$
for $L^2 (\mathbb{R})$ was left open. We answer this in the affirmative.
In fact, if we delete any atom from the system, the following is true.

\begin{theorem}\label{weightedlower}
For every fixed pair $(a,b)\in\mathbb{Z}^2$, consider the Gabor system
\[
\{e^{2\pi int}\cdot e^{-\pi(t-m)^2}:\, (m,n)\in \mathbb{Z}^2\setminus\{(a,b)\}\}
\]
which is complete and minimal in $L^2 (\mathbb{R})$. Let $\{\gamma_{m,n,a,b}:\, (m,n)\in \mathbb{Z}^2\setminus\{(a,b)\}\}$ be the unique biorthogonal family to this Gabor system in $L^2(\mathbb{R})$. Then
there exist positive constants $c_{m,n}$, so that the weighted Gabor system
\[
\{c_{m,n}\cdot  e^{2\pi int}\cdot e^{-\pi(t-m)^2}:\, (m,n)\in \mathbb{Z}^2\setminus\{(a,b)\}\}
\]
is a complete Riesz-Fischer sequence in $L^2 (\mathbb{R})$ and a lower semi frame for $L^2(\mathbb{R})$.
In particular this holds if we choose the constants $c_{m,n}$ so that
\[
\sum_{(m,n)\in \mathbb{Z}^2\setminus\{(a,b)\}}\frac{||\gamma_{m,n,a,b}||_{L^2(\mathbb{R})}^2}{c_{m,n}^2}<\infty.
\]
\end{theorem}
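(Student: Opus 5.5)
The plan is to use the biorthogonal family to produce a Riesz--Fischer bound directly, and then to read off the lower semi frame property by duality. Throughout, write $g_{m,n}(t)=e^{2\pi int}e^{-\pi(t-m)^2}$ and $\Lambda=\mathbb{Z}^2\setminus\{(a,b)\}$. Completeness and minimality of $\{g_{m,n}\}_{\Lambda}$ are the classical facts recalled in the Remark (excess one). Minimality gives a biorthogonal family, and completeness makes it unique. Fix positive weights $c_{m,n}$ with
\[
K:=\sum_{(m,n)\in\Lambda}\frac{\|\gamma_{m,n,a,b}\|^2}{c_{m,n}^2}<\infty ;
\]
such weights exist, for instance by taking $c_{m,n}$ large along an enumeration of $\Lambda$. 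Set $f_{m,n}=c_{m,n}g_{m,n}$. Multiplying by nonzero scalars preserves completeness. The family $h_{m,n}:=\gamma_{m,n,a,b}/c_{m,n}$ is biorthogonal to $\{f_{m,n}\}$, and by the choice of weights $\sum\|h_{m,n}\|^2=K<\infty$.

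\textbf{Riesz--Fischer property.} Given $\{a_{m,n}\}\in\ell^2(\Lambda)$, the series $h:=\sum a_{m,n}h_{m,n}$ converges absolutely in $L^2(\mathbb{R})$, since Cauchy--Schwarz gives
\[
\sum|a_{m,n}|\,\|h_{m,n}\|\le \|a\|_{\ell^2}\sqrt K .
\]
Biorthogonality gives $\langle h,f_{m,n}\rangle=a_{m,n}$ for every $(m,n)$, and $\|h\|\le\sqrt K\,\|a\|$. So the moment problem is solvable with a norm bound, which is the Riesz--Fischer property, i.e. the existence of a lower bound $\sum|\langle h,f_{m,n}\rangle|^2\ge K^{-1}\|h\|^2$ for a suitable solution $h$. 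Equivalently, the synthesis operator satisfies $\|\sum a_{m,n}f_{m,n}\|\ge K^{-1/2}\|a\|$ for finite sequences. To see this directly, pair $\sum a_{m,n}f_{m,n}$ with $h_{k,l}$ to get $a_{k,l}=\langle \sum a f, h_{k,l}\rangle$. Then
\[
\sum|a_{k,l}|^2\le \Big\|\sum a f\Big\|^2\sum\|h_{k,l}\|^2 = K\,\Big\|\sum a f\Big\|^2 .
\]
This direct argument is the cleaner statement of Riesz--Fischer.

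\textbf{Lower semi frame.} I need $\sum|\langle f,f_{m,n}\rangle|^2\ge A\|f\|^2$ for all $f\in L^2(\mathbb{R})$. The naive approach is to expand $f$ in the $h$'s, but the $h_{m,n}$ need not span with coefficients $\langle f,f_{m,n}\rangle$, since the system is not a basis. This is the main obstacle: a Riesz--Fischer sequence is only a lower semi frame for its closed span if the analysis inequality holds, so I must transfer the bound from synthesis to analysis. The analysis operator $T f=\{\langle f,f_{m,n}\rangle\}$ is defined on the domain $D=\{f:\ Tf\in\ell^2\}$. Its formal adjoint is synthesis on finite sequences, which we showed is bounded below by $K^{-1/2}$ and has dense range by completeness. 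I would instead argue directly as follows. Let $f\in D$ with $c=Tf\in\ell^2$, and set $h=\sum c_{m,n}h_{m,n}$, which lies in $L^2$ by the Riesz--Fischer step. Then $\langle f-h,f_{m,n}\rangle=0$ for all $(m,n)$, so completeness of $\{f_{m,n}\}$ forces $f=h$. Hence $\|f\|=\|h\|\le\sqrt K\,\|c\|$, that is,
\[
\sum|\langle f,f_{m,n}\rangle|^2\ge K^{-1}\|f\|^2 .
\]
For $f\notin D$ the sum is infinite, so the inequality holds trivially. This gives a lower semi frame with bound $1/K$.

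The crux is exactly this last identification $f=h$. It uses completeness of the reduced Gabor system, which is where the excess-one fact enters, together with the Hilbert--Schmidt-type summability of the biorthogonal family that the weights enforce.
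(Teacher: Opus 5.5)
Your proof is correct and is essentially the paper's argument: the summability condition makes $\{\gamma_{m,n,a,b}/c_{m,n}\}$ a square-summable (hence Bessel) family biorthogonal to the weighted system, which gives the Riesz--Fischer property, and completeness then turns this into the lower semi frame bound. The only difference is that you prove inline, with the explicit constant $1/K$, the two results the paper cites from Casazza et al.\ (Proposition 2.3(ii) and Theorem 3.2); your identification $f=\sum\langle f,f_{m,n}\rangle h_{m,n}$ for $f$ with square-summable coefficients is exactly the standard proof of the latter (minor point: rename the coefficient sequence $Tf$, since $c$ clashes with the weights $c_{m,n}$).
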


\begin{corollary}\label{GaborWLF}
The Gaussian Gabor system $G(\varphi, \mathbb{Z}^2)$ is a weighted lower semi frame for $L^2 (\mathbb{R})$.
\end{corollary}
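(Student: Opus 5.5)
The statement to prove is Corollary~\ref{GaborWLF}, and I would obtain it directly from Theorem~\ref{weightedlower}. First recall what is meant. A sequence $\{f_k\}$ is a lower semi frame for $L^2(\mathbb{R})$ if there is $A>0$ with
\[
A\|f\|^2\le \sum_k |\langle f,f_k\rangle|^2 \quad\text{for all } f\in L^2(\mathbb{R}),
\]
where the right-hand side is allowed to be $+\infty$. A system $\{g_k\}$ is a weighted lower semi frame if there are positive weights $w_k$ such that $\{w_k g_k\}$ is a lower semi frame.

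The plan is to fix any pair $(a,b)$, say $(0,0)$, and apply Theorem~\ref{weightedlower}. This gives positive constants $c_{m,n}$ for $(m,n)\neq(a,b)$ such that $\{c_{m,n}e^{2\pi int}e^{-\pi(t-m)^2}\}_{(m,n)\neq(a,b)}$ is a lower semi frame with some bound $A>0$. We then extend the weights to all of $\mathbb{Z}^2$ by setting $c_{a,b}:=1$; any positive value would do.

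The key step is monotonicity of the analysis sum. Adding the term for $(a,b)$ only adds a nonnegative quantity, so for every $f$,
\[
\sum_{(m,n)\in\mathbb{Z}^2} c_{m,n}^2|\langle f,\varphi_{m,n}\rangle|^2 \;\ge\; \sum_{(m,n)\neq(a,b)} c_{m,n}^2|\langle f,\varphi_{m,n}\rangle|^2 \;\ge\; A\|f\|^2,
\]
where $\varphi_{m,n}(t)=e^{2\pi int}e^{-\pi(t-m)^2}$. The same bound $A$ therefore works for the full weighted system, and $G(\varphi,\mathbb{Z}^2)$ is a weighted lower semi frame.

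There is essentially no obstacle at the level of the corollary. All the substance lies in Theorem~\ref{weightedlower}, which is assumed here. That theorem rests on the Riesz--Fischer property: for a complete minimal system with biorthogonal family $\{\gamma_k\}$, the summability $\sum\|\gamma_k\|^2/c_k^2<\infty$ gives $\|f\|^2\le C\sum |\langle f, c_k g_k\rangle|^2$. The only point to check in the corollary itself is that the definition of a lower semi frame tolerates an infinite right-hand side and has no upper bound, which is standard.
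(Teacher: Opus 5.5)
Your proposal is correct and follows the paper's proof. You delete one atom, apply Theorem~\ref{weightedlower} to the resulting complete minimal subsystem, and pass back to the full system, making explicit a step the paper leaves implicit: you assign an arbitrary positive weight to the removed atom and note that adding a nonnegative term keeps the lower bound.
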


\begin{proof}
Since $G(\varphi, \mathbb{Z}^2)$ has excess equal to one, deleting any atom yields a complete and minimal system in $L^2 (\mathbb{R})$.
By Theorem \ref{weightedlower}, suitable weights make this subsystem a complete Riesz–Fischer sequence and lower semi frame. Hence the full Gaussian Gabor system is a weighted lower semi frame.
\end{proof}

We point out that Theorem \ref{weightedlower} follows immediately
from the following general result, whose proof is given in Section 3. 

\begin{theorem}\label{exact}
Let $\{f_n\}$ be a complete and minimal family in a separable Hilbert space $\cal{H}$, thus having a unique biorthogonal family $\{r_n\}$.
If positive constants $\{c_n\}$ satisfy $\sum||r_n||^2/c_n^2<\infty$, then the family $\{r_n/c_n\}$ is a Bessel sequence in $\cal H$ 
and the family $\{c_n f_n\}$ is a complete Riesz-Fischer sequence in $\cal H$, hence a lower semi frame for $\cal{H}$.
\end{theorem}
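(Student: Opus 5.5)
The plan is to work directly from the definitions, using only Cauchy--Schwarz and the biorthogonality relation $\langle f_n, r_m\rangle=\delta_{nm}$. We recall that $\{g_n\}$ is a Riesz--Fischer sequence if there is $A>0$ with $A\sum|a_n|^2\le \|\sum a_n g_n\|^2$ for all finitely supported sequences $\{a_n\}$. It is a lower semi frame if there is $A>0$ with $A\|f\|^2\le\sum|\langle f,g_n\rangle|^2$ for all $f\in\mathcal H$, where the right-hand side may be infinite.

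\textbf{Step 1: the Bessel property of $\{r_n/c_n\}$.} For $f\in\mathcal H$, Cauchy--Schwarz gives
\[
\sum_n\Bigl|\Bigl\langle f,\frac{r_n}{c_n}\Bigr\rangle\Bigr|^2\le \|f\|^2\sum_n\frac{\|r_n\|^2}{c_n^2}=:B\|f\|^2 .
\]
So $\{r_n/c_n\}$ is Bessel with bound $B<\infty$. Equivalently, the synthesis map $T\{a_n\}=\sum a_n r_n/c_n$ is bounded from $\ell^2$ with $\|T\|\le\sqrt B$; in fact $T$ is Hilbert--Schmidt.

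\textbf{Step 2: the Riesz--Fischer property of $\{c_nf_n\}$.} Let $\{a_n\}$ be finitely supported and set $g=\sum a_n c_n f_n$. By biorthogonality, $\langle g, r_m/c_m\rangle = a_m$. The Bessel inequality from Step 1, applied to $g$, then gives
\[
\sum|a_m|^2=\sum_m|\langle g,r_m/c_m\rangle|^2\le B\|g\|^2 .
\]
This is the Riesz--Fischer inequality with constant $1/B$. Completeness of $\{c_nf_n\}$ is immediate, since its span equals the span of $\{f_n\}$ and the $c_n$ are positive. We should also dispose of the degenerate case $B=0$: it would force every $r_n=0$, which contradicts $\langle f_n,r_n\rangle=1$.

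\textbf{Step 3: the lower semi frame property.} This is the only step that needs some care, because a Riesz--Fischer sequence is not in general a lower semi frame. Here completeness is the essential extra ingredient, and I expect handling it correctly to be the main obstacle. The approach is to use the standard duality. For $f\in\mathcal H$, if $\sum|\langle f,c_nf_n\rangle|^2=\infty$ there is nothing to prove. Otherwise, take any finitely supported $\{a_n\}$ and estimate
\[
\Bigl|\Bigl\langle f,\sum a_nc_nf_n\Bigr\rangle\Bigr|\le \Bigl(\sum|a_n|^2\Bigr)^{1/2}\Bigl(\sum|\langle f,c_nf_n\rangle|^2\Bigr)^{1/2}\le \sqrt B\,\Bigl\|\sum a_nc_nf_n\Bigr\|\Bigl(\sum|\langle f,c_nf_n\rangle|^2\Bigr)^{1/2}.
\]
Taking the supremum over unit vectors in the dense span of $\{c_nf_n\}$, which is dense by completeness, gives
\[
\|f\|^2\le B\sum_n|\langle f,c_nf_n\rangle|^2 .
\]
So $\{c_nf_n\}$ is a lower semi frame with bound $1/B$.

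Finally, Theorem \ref{weightedlower} follows by taking $\{f_n\}$ to be the Gabor system with $(a,b)$ removed, which is complete and minimal because of excess one. A valid choice of constants always exists, for instance $c_{m,n}=\|\gamma_{m,n,a,b}\|\,w_{m,n}$ with any positive $w_{m,n}$ such that $\sum w_{m,n}^{-2}<\infty$.
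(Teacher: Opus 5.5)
Your proposal is correct and follows the same chain as the paper: a Cauchy--Schwarz Bessel bound for $\{r_n/c_n\}$, then biorthogonality to get the Riesz--Fischer property of $\{c_nf_n\}$, then completeness to get the lower semi frame property. The only difference is that you prove inline the two results the paper cites from Casazza (Proposition 2.3(ii) and Theorem 3.2), which makes the argument self-contained and gives the explicit lower bound $1/B$ with $B=\sum\|r_n\|^2/c_n^2$.
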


\begin{remark}
Theorem \ref{exact} was also proved in \cite[Theorem 1.3]{Zikkos2025ACHA}
but under the stronger condition that the sequence $\{||r_n||/c_n\}$ belongs to the space $\ell^1$. 
The present formulation requires only that $\{||r_n||/c_n\}$ belongs to the space $\ell^2$.
\end{remark}

\begin{corollary}\label{subexact}
Any family in $\cal H$ that contains a subfamily which is complete and minimal in $H$ is a weighted lower semi frame.
\end{corollary}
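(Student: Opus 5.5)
The statement to prove is Corollary \ref{subexact}: if a family $\{g_k\}_{k\in K}$ in $\cal H$ contains a subfamily $\{g_k\}_{k\in J}$, $J\subseteq K$, that is complete and minimal, then some positive weights $\{w_k\}$ make $\{w_k g_k\}$ a lower semi frame.

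The plan is to reduce everything to Theorem \ref{exact} applied to the subfamily, and then handle the remaining elements with arbitrary positive weights. Recall that a lower semi frame is a family $\{h_k\}$ with a constant $A>0$ such that
\[
A\|f\|^2\le \sum_k |\langle f,h_k\rangle|^2 \quad\text{for all } f\in\cal H,
\]
where the right-hand side may be $+\infty$.

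First, since $\{g_k\}_{k\in J}$ is complete and minimal, it has a unique biorthogonal family $\{r_k\}_{k\in J}$. Choose $c_k>0$ with $\sum_{k\in J}\|r_k\|^2/c_k^2<\infty$; for instance, take $c_k=2^{j(k)/2}\|r_k\|$ after enumerating $J$ as $j\mapsto k$. This choice is legitimate because minimality gives $r_k\neq 0$. Theorem \ref{exact} then says that $\{c_k g_k\}_{k\in J}$ is a lower semi frame, with some constant $A>0$.

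Next, set $w_k=c_k$ for $k\in J$ and $w_k=1$ (or any positive number) for $k\in K\setminus J$. For every $f$,
\[
\sum_{k\in K}|\langle f,w_kg_k\rangle|^2\ \ge\ \sum_{k\in J}|\langle f,c_kg_k\rangle|^2\ \ge\ A\|f\|^2,
\]
because every term in the full sum is nonnegative. Hence $\{w_kg_k\}_{k\in K}$ is a lower semi frame, and so the original family is a weighted lower semi frame.

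There is no real obstacle beyond a few bookkeeping points. If one also wants to quote the lower bound directly, note that for a complete Riesz--Fischer sequence it follows from Theorem \ref{exact}'s Bessel property of $\{r_k/c_k\}$, with bound $B$ say. Writing $f=\sum_k \langle f,c_kg_k\rangle\, r_k/c_k$ (checked on finite spans via biorthogonality and extended by density and the Bessel bound) and using Cauchy--Schwarz gives $A=1/B$. This is exactly the content of Theorem \ref{exact}, so we simply cite it. The only other point is that the index set must be countable for the summability choice, which holds since a complete minimal family in a separable Hilbert space is countable.
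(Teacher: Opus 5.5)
Your proof is correct and follows the paper's argument, which, as in the proof of Corollary \ref{GaborWLF}, applies Theorem \ref{exact} to the complete minimal subfamily and observes that giving the remaining vectors arbitrary positive weights can only increase $\sum_k|\langle f,w_kg_k\rangle|^2$. One small fix in your optional aside: the expansion $f=\sum_k\langle f,c_kg_k\rangle r_k/c_k$ should be restricted to $f$ with $\sum_k|\langle f,c_kg_k\rangle|^2<\infty$, and it should be justified by completeness of $\{g_k\}_{k\in J}$ (the Bessel-convergent series has the same inner product with each $g_l$ as $f$), not by density of finite spans of $\{r_k\}$, which need not be complete.
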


\begin{remark}
As proved recently in \cite{BalazsShamsabadi}, complete Riesz-Fischer sequences are equivalent to minimal lower frame sequences.
\end{remark}

Theorem \ref{exact} answers partially the following conjecture.

\begin{conjecture}\cite[Conjecture 3.1]{Balazs2026JFAA}
If a family $\cal{F}$ in a separable Hilbert space $\cal{H}$ is complete, then $\cal{F}$ is a weighted lower semi frame
for $\cal{H}$.
\end{conjecture}

The conjecture is known to hold if $\cal{F}$  has a reproducing partner for $\cal{H}$ (\cite[Proposition 3.3]{Balazs2026JFAA}).
A special case is when $\cal{F}$ contains a Schauder basis for $\cal{H}$ or a frame for $\cal{H}$
(\cite[Corollary 3.5]{Balazs2026JFAA}). Corollary \ref{subexact} adds a further
case: any complete family containing a complete and minimal subfamily.

\begin{remark}
The case when F is complete but every complete subfamily has infinite
excess remains open.
\end{remark}
A natural example is the M\"{u}ntz family $\{t^{\lambda_n}\}$, where $\{\lambda_n\}\subset\mathbb{N}$ and $\sum \lambda_n^{-1}=\infty$.  
By the M\"{u}ntz-Sz\'{a}sz theorem the family is complete in $L^2 (0,1)$ if and only if $\sum \lambda_n^{-1}=\infty$.
Thus, every complete subfamily of $\{t^{\lambda_n}\}$ again has an infinite excess in $L^2(0,1)$. Whether such families are weighted lower semi frames for $L^2 (0,1)$ is unknown.

To this end, we note that since $G(\varphi, \mathbb{Z}^2)$ does not have a reproducing partner,
\cite[Proposition 3.3]{Balazs2026JFAA} cannot be applied to deduce whether $G(\varphi, \mathbb{Z}^2)$
is a weighted lower semi frame for $L^2 (\mathbb{R})$.
Theorem \ref{weightedlower} yields Corollary \ref{GaborWLF}, which settles this question for the Gaussian Gabor system.

Another example of the same principle is the following.
\begin{theorem}\cite[Theorem 1.4]{Zikkos2025ACHA}
For any fixed $\alpha\ge\frac{1}{2}$, the system $\{t^{\alpha}\cdot e^{2\pi i n t}\}_{n\in\mathbb{Z}}$ has a finite excess in $L^2(0,1)$,
it does not have a reproducing partner for $L^2 (0,1)$, yet it is a weighted lower semi frame for $L^2 (0,1)$.
\end{theorem}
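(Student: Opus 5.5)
The natural route is to reduce the weighted lower semi frame assertion to Corollary \ref{subexact}. Concretely, I would show that removing a suitable finite set of exponents leaves a complete and minimal subfamily; this also gives finite excess. The absence of a reproducing partner is a separate and harder statement, which I would treat last.

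\textbf{Completeness.} Write $f_n(t)=t^{\alpha}e^{2\pi int}$ and let $S\subset\mathbb{Z}$ be finite. Suppose $f\in L^2(0,1)$ is orthogonal to all $f_n$, $n\notin S$. Then $g=\overline{f}\,t^{\alpha}\in L^2(0,1)$ has vanishing Fourier coefficients outside $S$. Hence $g=p$ is a trigonometric polynomial with frequencies in $S$, and $\overline{f}=p\,t^{-\alpha}$. Conversely, every such $p$ with $p\,t^{-\alpha}\in L^2(0,1)$ gives an orthogonal element. So $\{f_n\}_{n\notin S}$ is complete if and only if no nonzero $p\in\operatorname{span}\{e^{2\pi int}\}_{n\in S}$ has $p\,t^{-\alpha}\in L^2(0,1)$.

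Since $t^{-\alpha}$ is singular only at $0$, and $p$ is smooth with $p(t)\sim c\,t^{j}$ near $0$ where $j$ is its vanishing order, we have $p\,t^{-\alpha}\in L^2$ iff $2(j-\alpha)>-1$, i.e.\ $j>\alpha-\tfrac12$. Taking $S=\emptyset$ already shows the full system is complete.

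\textbf{Exponent count.} Let $K$ be the least integer with $K>\alpha-\tfrac12$; since $\alpha\ge\tfrac12$, $K\ge1$. A nonzero combination of $N$ distinct exponentials cannot vanish at $0$ to order $N$. Indeed, the conditions $p^{(k)}(0)=0$, $k=0,\dots,N-1$, form a Vandermonde system in the distinct nodes $2\pi i n$, $n\in S$, which is nonsingular. On the other hand, order $N-1$ can be achieved by solving $N-1$ homogeneous equations in $N$ unknowns.

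\textbf{Complete and minimal subfamily.} Fix any $S$ with $|S|=K$. Every nonzero $p$ in its span vanishes to order at most $K-1<K$, hence to order $\le\alpha-\tfrac12$. Thus $\{f_n\}_{n\notin S}$ is complete.

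If we also remove one more index $m$, the span over $S\cup\{m\}$ contains a nonzero $p$ vanishing to order $K>\alpha-\tfrac12$. Then $p\,t^{-\alpha}\in L^2$, so the family loses completeness. Hence $\{f_n\}_{n\notin S}$ is complete and minimal, and the excess equals $K<\infty$. Corollary \ref{subexact}, i.e.\ Theorem \ref{exact} applied with weights $c_n$ satisfying $\sum\|r_n\|^2/c_n^2<\infty$, then shows that $\{t^\alpha e^{2\pi int}\}_{n\in\mathbb{Z}}$ is a weighted lower semi frame.

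\textbf{No reproducing partner.} This is the step I expect to be the main obstacle. I would argue by contradiction: suppose $\{g_n\}$ satisfies $\langle f,h\rangle=\sum_n\langle f,g_n\rangle\langle f_n,h\rangle$ for all $f,h\in L^2(0,1)$. Substituting $h=u\,t^{-\alpha}$, where $u$ ranges over trigonometric polynomials vanishing to high order at $0$ (so that $h\in L^2$), gives
\[
\langle f,u\,t^{-\alpha}\rangle=\sum_n\langle f,g_n\rangle\,\overline{\hat u(n)} .
\]
Thus, on this class of test functions, $f\,t^{-\alpha}$ acts as the distribution with Fourier coefficients $\langle f,g_n\rangle$.

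I would try to show that this forces a growth or summability bound on $\{\langle f,g_n\rangle\}$, via a uniform boundedness or closed graph argument as in Heil's treatment in \cite{Heil2025JFAA}. I would then choose a test function such as $f=1$ (or a function behaving like $t^{\alpha-1/2}|\log t|^{-1}$ near $0$). For such $f$, the function $f\,t^{-\alpha}$ is too singular at $0$ for those coefficient bounds to hold, and this would give the contradiction.

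If this direct approach stalls, the fallback is to adapt the abstract obstruction used for the Gaussian system in \cite{Heil2025JFAA}. That argument shows a family of finite positive excess that is not a Schauder basis after trimming admits no reproducing partner. The needed input here is that the complete minimal subfamily $\{f_n\}_{n\notin S}$ is not a Schauder basis, which I would verify from the unboundedness of the partial-sum projections caused by the singularity of $t^{-\alpha}$.
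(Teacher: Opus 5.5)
Your finite-excess computation is correct: the Vandermonde count shows the excess is exactly $K$, the least integer exceeding $\alpha-\frac12$. Deducing the weighted lower semi frame property from Corollary~\ref{subexact} is also correct, and it is exactly the ``same principle'' the paper invokes. The paper gives no further proof of this theorem and simply cites \cite{Zikkos2025ACHA}.

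The genuine gap is the claim that there is no reproducing partner, which you only outline. Your identity with $h=u\,t^{-\alpha}$ is a finite sum for each $u$, so it cannot give a contradiction by itself. For every $f$ some sequence satisfies it, and it fixes $(\langle f,g_n\rangle)_n$ only up to sequences of the form $\sum_{k<K}a_k n^k$. Everything therefore rests on the ``growth or summability bound'', which you neither state nor prove. Also, $t^{\alpha-1/2}|\log t|^{-1}$ would not serve as a test function, since then $f\,t^{-\alpha}=t^{-1/2}|\log t|^{-1}$ is square integrable near $0$.

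The missing bound is $\ell^2$, and it comes from the Bessel property together with $\|f_n\|^2=1/(2\alpha+1)$. Suppose, as for reproducing pairs, the series is required to converge absolutely. Fix $f$ and set $c_n=\langle f,g_n\rangle$.
\begin{itemize}
\item For each finite $F$ and signs $\theta_n=\pm1$, the vector $v_{F,\theta}=\sum_{n\in F}\theta_n c_n f_n$ satisfies $|\langle v_{F,\theta},h\rangle|\le\sum_n|c_n|\,|\langle f_n,h\rangle|<\infty$ for every $h$.
\item Uniform boundedness gives $\sup_{F,\theta}\|v_{F,\theta}\|=M<\infty$.
\item Averaging over random signs gives $\sum_n|c_n|^2\le(2\alpha+1)M^2$.
\item Hence $\sum_n c_n f_n$ converges in norm to $t^\alpha D$, where $D=\sum_n c_n e^{2\pi int}\in L^2(0,1)$, and this limit equals $f$.
\end{itemize}
So $f\,t^{-\alpha}\in L^2(0,1)$ for every $f$, which fails for $f=1$ when $\alpha\ge\frac12$. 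Put differently, a Bessel sequence with norms bounded below that has a reproducing partner is a frame. Here $\sum_n|\langle f,f_n\rangle|^2=\|t^\alpha f\|^2$ has no lower bound, so no partner exists.

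If you use the fallback instead, or the definition only asks for convergence in a fixed order, two things are needed. You must quote the exact abstract result, and you must show the trimmed system is not a Schauder basis in any enumeration. Bounding partial-sum projections handles only one ordering, and after trimming the coefficient functionals are no longer Fourier coefficients. The clean test is $\|f_n\|\,\|r_n\|\to\infty$, which contradicts $\|f_n\|\,\|r_n\|\le 2C$ for a basis with constant $C$.
\begin{itemize}
\item For $K=1$ and $S=\{s\}$ one has $r_n=t^{-\alpha}(e^{2\pi int}-e^{2\pi ist})$. Then $\|r_n\|^2=\int_0^1 4\sin^2(\pi(n-s)t)\,t^{-2\alpha}\,dt$ grows like $|n|^{2\alpha-1}$, or like $\log|n|$ when $\alpha=\frac12$.
\item For $K\ge2$ the interpolation coefficients already grow like $|n|^{K-1}$.
\end{itemize}
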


\section{Background}
\setcounter{equation}{0}

Let $\mathcal{H}$ be a separable Hilbert space endowed with an inner product $\langle .\,  _,\, .\rangle$ and a
norm $||\, .\,  ||$. Let $F:=\{f_n\}$ be a countable family of vectors in $\mathcal{H}$.

$F$ is $complete$ in $\cal H$ if its closed span in $\mathcal{H}$ is equal to $\mathcal{H}$. $F$
is $minimal$ if each vector $f_n$ lies outside the closed span of the remaining elements of $F$ in $\mathcal{H}$.
\begin{remark}
We say that $F$ is exact if it is both complete and minimal.
\end{remark}

The minimality of $F$ is equivalent to the existence of a $biorthogonal$ family,
that is, there is a sequence $\{g_n\}$ in $\mathcal{H}$, so that $\langle f_n , g_m \rangle = \delta_{n,m}$. If $F$ is exact then 
$\{g_n\}$ is unique but it does not have to be exact.

A family $F=\{f_n\}$ is a $frame$ if there exist two positive numbers, $C$ and $D$, so that
\[
C\cdot ||f||^2\le \sum |\langle f, f_n \rangle |^2\le D\cdot ||f||^2,
\qquad \forall\,\, f\in\cal{H}.
\]
If only the left inequality holds then $F$ is a \emph{lower semi frame} and
if only the right inequality holds, then $F$ is a $Bessel$ sequence.

\begin{remark}
A family $F=\{f_n\}$ in $\mathcal{H}$ is called a $\bf weighted$ lower semi frame if there exist a positive number $C$
and real positive numbers $c_n$, so that
\[
C\cdot ||f||^2\le \sum |\langle f, c_n f_n \rangle |^2,\qquad
\qquad \forall\,\, f\in\cal{H}.
\]
\end{remark}

A result attributed to Bari \cite[Chapter 4, Section 2, Theorem 3]{Young} characterizes Bessel sequences:
$F$ is Bessel if and only if there exists a positive number $C$ so that for any finite scalar sequence
$\{\gamma_n\}$ we have
\begin{equation}\label{bessel}
\left|\left|\sum \gamma_n f_n \right|\right|^2 \le C\sum |\gamma_n|^2.
\end{equation}

A family $F=\{f_n\}$ is called a $Riesz-Fischer$ sequence (see \cite[Chapter 4, Section 2]{Young})
if for every sequence $\{a_n\}$ in the space $\ell^2$ the moment problem $\langle f, f_n\rangle =a_n$
has a solution $f\in\cal{H}$. We also have the following characterization 
(\cite[Chapter 4, Section 2, Theorem 3]{Young}): 
$F$ is a Riesz-Fischer sequence in $\cal H$ if and only if
there exists a positive number $C$ so that for any finite scalar sequence $\{\gamma_n\}$ we have
\[
C\sum |\gamma_n|^2   \le \left|\left|\sum \gamma_n f_n \right|\right|^2.
\]
This inequality implies that a Riesz-Fischer sequence is also a minimal sequence.

Finally, we collect results that connect Bessel sequences, Riesz-Fischer sequences and lower semi frames. 
\begin{theorem}\label{Casazza} \cite[Proposition 2.3, (ii)]{Casazza}\label{Cas1}
The Riesz-Fischer sequences in $\cal{H}$ are precisely the families for which a biorthogonal Bessel sequence exists.
\end{theorem}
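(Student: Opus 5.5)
The plan is to prove the two implications separately. The direction from a Bessel biorthogonal family to the Riesz--Fischer property follows directly from the inequality characterization of Riesz--Fischer sequences. For the converse I will build the biorthogonal Bessel family from the moment-problem definition, using a closed graph argument.

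\emph{Bessel biorthogonal implies Riesz--Fischer.} Suppose $\{g_n\}$ is a Bessel sequence with bound $D$ and $\langle f_n, g_m\rangle=\delta_{n,m}$. Take a finite scalar sequence $\{\gamma_n\}$ and put $h=\sum \gamma_n f_n$. Biorthogonality gives $\langle h, g_m\rangle=\gamma_m$ for every $m$. Hence
\[
\sum |\gamma_m|^2=\sum|\langle h, g_m\rangle|^2\le D\,\|h\|^2 .
\]
This is the lower inequality in the characterization from \cite{Young}, with constant $C=1/D$, so $F$ is Riesz--Fischer.

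\emph{Riesz--Fischer implies a Bessel biorthogonal family exists.} Let $V$ be the closed span of $\{f_n\}$. For $a=\{a_n\}\in\ell^2$ the moment problem $\langle f,f_n\rangle=a_n$ has some solution $f\in\mathcal H$. Its orthogonal projection onto $V$ is again a solution, because $f-P_Vf\perp f_n$ for all $n$. The solution in $V$ is unique, since an element of $V$ orthogonal to every $f_n$ is zero. So there is a well-defined map $S:\ell^2\to V$ sending $a$ to this unique solution in $V$, and uniqueness makes $S$ linear.

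The main step is to show that $S$ is bounded, and I will do this with the closed graph theorem. Suppose $a^{(k)}\to a$ in $\ell^2$ and $Sa^{(k)}\to h$ in $\mathcal H$. Then $h\in V$, and for each $n$
\[
\langle h,f_n\rangle=\lim_k\langle Sa^{(k)},f_n\rangle=\lim_k a^{(k)}_n=a_n ,
\]
because coordinate convergence follows from $\ell^2$ convergence. Hence $h=Sa$, the graph is closed, and $S$ is bounded.

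Now set $g_m=S(e_m)$, where $e_m$ is the $m$-th unit vector of $\ell^2$. Then $\langle g_m,f_n\rangle=\delta_{n,m}$, so $\{g_m\}$ is biorthogonal to $F$. For any finite scalar sequence $\{\gamma_m\}$,
\[
\Bigl\|\sum\gamma_m g_m\Bigr\|^2=\Bigl\|S\Bigl(\sum\gamma_m e_m\Bigr)\Bigr\|^2\le \|S\|^2\sum|\gamma_m|^2 .
\]
This is exactly condition \eqref{bessel}, so by Bari's criterion $\{g_m\}$ is a Bessel sequence. This completes the equivalence.
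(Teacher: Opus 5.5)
Your proof is correct. The paper gives no argument for this statement. It is quoted from \cite{Casazza} (Proposition 2.3(ii) there) and used as a black box in the proof of Theorem~\ref{exact}, so your write-up is a self-contained justification rather than a variant of an argument in the paper.

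\emph{First direction.} To show that a Bessel biorthogonal family gives a Riesz--Fischer sequence, you rely on Young's characterization: the lower inequality implies that the moment problem is solvable. That implication is itself proved by a Riesz-representation argument. It is more direct to solve the moment problem outright. For $\{a_n\}\in\ell^2$, set $f=\sum a_n g_n$. The series converges because a Bessel sequence has a bounded synthesis operator, and $\langle f,f_m\rangle=a_m$ by biorthogonality.

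\emph{Second direction.} Your closed graph argument works straight from the moment-problem definition and does not need the other half of Young's characterization. The cost is a Bessel bound $\|S\|^2$ that is not explicit. If you instead start from the inequality $C\sum|\gamma_n|^2\le\|\sum\gamma_n f_n\|^2$, the coordinate functionals $\sum\gamma_n f_n\mapsto\gamma_m$ are bounded on the span and extend to $V$. They are represented by vectors $g_m\in V$, which form a biorthogonal Bessel family with explicit bound $1/C$.

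\emph{Last step.} The appeal to Bari's criterion can be dropped. Since $g_m=Se_m\in V$, you have $\sum_m|\langle x,g_m\rangle|^2=\|S^*P_Vx\|^2\le\|S\|^2\|x\|^2$ for every $x\in\mathcal H$.
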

Complete Riesz-Fischer sequences are equivalent to minimal lower frame sequences: indeed
this follows from the next two results.
\begin{theorem}\label{CasazzaLower}\cite[Theorem 3.2]{Casazza}
Let $\{f_n\}$ be a complete Riesz-Fischer sequence in a Hilbert space $\cal{H}$.
Then $\{f_n\}$ is a minimal sequence and also a lower semi frame for $\cal{H}$.
\end{theorem}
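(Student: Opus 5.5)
The plan is to get minimality directly from the Riesz--Fischer inequality, and to get the lower frame bound by producing a Bessel biorthogonal family via Theorem \ref{Cas1} and then reconstructing an arbitrary $f\in\mathcal H$ from its moments $\langle f,f_n\rangle$.

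\emph{Minimality.} Suppose some $f_m$ lay in the closed span of $\{f_n\}_{n\neq m}$. Then there would be finite combinations $\sum_{n\ne m}\gamma_n f_n$ arbitrarily close to $f_m$. Applying the Riesz--Fischer inequality to the coefficient vector with entry $-1$ at $m$ gives
\[
C\Bigl(1+\sum_{n\ne m}|\gamma_n|^2\Bigr)\le \Bigl\|f_m-\sum_{n\neq m}\gamma_n f_n\Bigr\|^2,
\]
which forces $C\le 0$, a contradiction. So $\{f_n\}$ is minimal, as already noted in the paper.

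\emph{Lower frame bound.} By Theorem \ref{Cas1}, there is a sequence $\{g_n\}$ with $\langle f_n,g_m\rangle=\delta_{n,m}$ which is Bessel with some bound $B$. By Bari's characterization \eqref{bessel}, the series $\sum a_n g_n$ converges in $\mathcal H$ for every $\{a_n\}\in\ell^2$, and
\[
\Bigl\|\sum a_n g_n\Bigr\|^2\le B\sum|a_n|^2.
\]
Here one passes from finite sums to infinite ones by the Cauchy criterion, since the partial sums are Cauchy.

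Now fix an arbitrary $f\in\mathcal H$. If $\sum|\langle f,f_n\rangle|^2=\infty$ there is nothing to prove, so assume this sum is finite and set
\[
h=\sum_n \langle f,f_n\rangle\, g_n .
\]
Using continuity of the inner product and biorthogonality,
\[
\langle h,f_m\rangle=\sum_n\langle f,f_n\rangle\langle g_n,f_m\rangle=\langle f,f_m\rangle \quad\text{for every } m.
\]
Hence $f-h$ is orthogonal to every $f_m$, and completeness of $\{f_n\}$ gives $f=h$. Therefore
\[
\|f\|^2=\|h\|^2\le B\sum|\langle f,f_n\rangle|^2,
\]
which is the lower semi frame inequality with constant $1/B$.

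The only delicate point is to avoid a density argument over finite spans. Such an argument fails because $f\mapsto\sum|\langle f,f_n\rangle|^2$ is only lower semicontinuous, which gives the wrong direction of inequality in the limit. The construction of $h$ above sidesteps this by working with an arbitrary $f$ directly. One also has to keep the conjugation conventions straight, so that $\langle g_n,f_m\rangle=\overline{\langle f_m,g_n\rangle}=\delta_{n,m}$; this is automatic since $\delta_{n,m}$ is real.
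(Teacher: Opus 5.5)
Your proof is correct. Note that the paper gives no argument of its own for this statement; it simply imports it from Casazza et al.\ (Theorem~3.2 there). Your write-up is therefore a self-contained substitute for that citation rather than an alternative to an internal proof.

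Your route is the standard one. Minimality follows directly from the Riesz--Fischer inequality. The lower bound comes from solving the moment problem explicitly, $f=\sum_n\langle f,f_n\rangle g_n$, using the Bessel biorthogonal family from Theorem~\ref{Cas1}, and then using completeness to identify $f$ with this series. It is not circular within the paper, since Theorem~\ref{Cas1} and Bari's criterion are independent quoted facts.

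What it buys beyond the citation is an explicit constant: the lower frame bound is $1/B$, where $B$ is any Bessel bound of the biorthogonal family. In the paper's application (the proof of Theorem~\ref{exact}) that family is the explicit sequence $\{r_n/c_n\}$. By Cauchy--Schwarz its Bessel bound is at most $S=\sum_n\|r_n\|^2/c_n^2$. Applying your argument directly to the biorthogonal pair $\{c_nf_n\}$, $\{r_n/c_n\}$ then yields
\[
\|f\|^2\le S\sum_n|\langle f,c_nf_n\rangle|^2\qquad\text{for all } f\in\mathcal H,
\]
bypassing Theorems~\ref{Cas1} and~\ref{CasazzaLower} altogether. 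For the Gabor system of Theorem~\ref{weightedlower} this gives the explicit lower bound $\bigl(\sum\|\gamma_{m,n,a,b}\|^2/c_{m,n}^2\bigr)^{-1}$.
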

\begin{theorem}\cite[Corollary 6.20]{BalazsShamsabadi}
Let $\{f_n\}$ be a minimal sequence and also a lower semi frame for a Hilbert space $\cal{H}$. Then
$\{f_n\}$ is a complete Riesz-Fischer sequence in $\cal{H}$.
\end{theorem}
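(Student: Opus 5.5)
The plan is to obtain completeness directly from the lower frame inequality, and the Riesz--Fischer property by producing a Bessel sequence biorthogonal to $\{f_n\}$, so that Theorem \ref{Casazza} applies. Throughout, $C>0$ denotes the lower semi frame constant, so that $C\|f\|^2\le\sum|\langle f,f_n\rangle|^2$ for all $f\in\mathcal H$ (the sum may be $+\infty$).

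\emph{Completeness.} Suppose $f\in\mathcal H$ is orthogonal to every $f_n$. Then the lower frame inequality gives $C\|f\|^2\le 0$, so $f=0$. Hence the closed span of $\{f_n\}$ is $\mathcal H$.

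\emph{A biorthogonal family.} Since $\{f_n\}$ is minimal, it has a biorthogonal family $\{g_n\}$ with $\langle f_n,g_m\rangle=\delta_{n,m}$. Because $\{f_n\}$ is complete, this family is unique, although uniqueness is not needed below.

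\emph{$\{g_n\}$ is Bessel.} Fix a finite scalar sequence $\{a_m\}$ and set $g=\sum_m a_m g_m$. By biorthogonality, $\langle g,f_n\rangle=\sum_m a_m\langle g_m,f_n\rangle=a_n$ for every $n$. In particular $\sum_n|\langle g,f_n\rangle|^2=\sum|a_n|^2$ is finite. Applying the lower frame inequality to this $g$ gives
\[
C\Bigl\|\sum_m a_m g_m\Bigr\|^2\le\sum_m|a_m|^2 .
\]
This is exactly the condition (\ref{bessel}) with constant $1/C$. By Bari's characterization, $\{g_n\}$ is a Bessel sequence.

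\emph{Conclusion.} The sequence $\{f_n\}$ therefore admits a biorthogonal Bessel sequence. By Theorem \ref{Casazza}, $\{f_n\}$ is a Riesz--Fischer sequence. Combined with the completeness shown in the first step, $\{f_n\}$ is a complete Riesz--Fischer sequence in $\mathcal H$.

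The only point needing care is the Bessel step. The lower frame inequality is applied to the specific vectors $g=\sum a_m g_m$, whose frame coefficients are exactly the finitely many $a_n$. Thus the inequality, read in reverse, becomes an upper bound on synthesis by $\{g_n\}$. Everything else follows from results already quoted in the paper.
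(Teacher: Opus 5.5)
Your proof is correct. The paper does not prove this statement itself. It quotes it from \cite[Corollary 6.20]{BalazsShamsabadi} and uses it, together with Theorem \ref{CasazzaLower}, only to justify the remark that complete Riesz--Fischer sequences and minimal lower frame sequences coincide. So there is no in-paper argument to match, and yours is a self-contained replacement.

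It uses only background the paper already records: Bari's criterion (\ref{bessel}) and Theorem \ref{Casazza}. It is essentially the proof of Theorem \ref{exact} run in reverse. There, a Bessel biorthogonal family gives the Riesz--Fischer property, and Theorem \ref{CasazzaLower} then gives the lower frame bound. Here, the lower frame bound, tested on the finite combinations $g=\sum a_m g_m$, produces the Bessel biorthogonal family. The frame coefficients of such $g$ are exactly the finitely many $a_n$, so the possibly infinite frame sum never causes trouble.

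Your argument is also quantitative. The family $\{g_n\}$ has Bessel bound $1/C$. Pairing $\sum\gamma_n f_n$ with $\sum\gamma_m g_m$ gives $\sum|\gamma_n|^2=|\langle\sum\gamma_n f_n,\sum\gamma_m g_m\rangle|\le C^{-1/2}\|\sum\gamma_n f_n\|\,(\sum|\gamma_n|^2)^{1/2}$, that is, $C\sum|\gamma_n|^2\le\|\sum\gamma_n f_n\|^2$. So the Riesz--Fischer bound equals the lower frame bound. This also means the appeal to Theorem \ref{Casazza} could be replaced by the Riesz--Fischer characterization from \cite{Young} quoted in Section 2. Whatever machinery the cited source uses, your argument shows that nothing beyond these classical facts is needed for this statement.
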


\section{Proof of Theorem $\ref{exact}$}
\setcounter{equation}{0}

Let $\{r_n\}$ denote the unique biorthogonal family to the complete minimal system $\{f_n\}$ in $\cal H$. 
Then for any positive constants $\{c_n\}$, the families $\{r_n/c_n\}$ and $\{c_n f_n\}$ are biorthogonal. 
Completeness of $\{f_n\}$  in $\cal H$ passes to the family $\{c_n f_n\}$.

Now, choose positive numbers $\{c_{n}\}$ so that
\[
\sum \frac{||r_{n}||^2}{c^2_{n}}<\infty.
\]
This condition implies that the family $\{r_n/c_n\}$ is a Bessel sequence in $\cal H$ hence by
Theorem \ref{Casazza} its biorthogonal family $\{c_n f_n\}$ is a Riesz-Fischer sequence in $\cal H$. Being also complete in $\cal H$,
Theorem $\ref{CasazzaLower}$ gives that $\{c_n f_n\}$ is a lower semi frame for $\cal H$.

\section{Open problem}

\begin{conjecture}
Preliminary symbolic/numerical exploration suggests that
when $(a,b)=(0,0)$ then
\[
||\gamma_{m,n,0,0}||_{L^2(\mathbb{R})}^2 \asymp \log (e+\sqrt{m^2+n^2}).
\]
Thus if we choose positive constants $c_{m,n}$ so that
\[
\sum_{(m,n)\in \mathbb{Z}^2\setminus\{(0,0)\}}\frac{ \log (e+\sqrt{m^2+n^2})  }{c_{m,n}^2}<\infty,
\]
then the weighted Gabor system
\[
\{c_{m,n}\cdot  e^{2\pi int}\cdot e^{-\pi(t-m)^2}:\, (m,n)\in \mathbb{Z}^2\setminus\{(0,0)\}\}
\]
is a complete Riesz-Fischer sequence in $L^2 (\mathbb{R})$ and a lower semi frame for $L^2(\mathbb{R})$.

At present, the author has not verified these estimates.
\end{conjecture}

\end{document}